\theoremstyle{plain}
\newtheorem{thm}{Theorem}[section]
\newtheorem*{thm*}{Theorem}
\newtheorem{prop}[thm]{Proposition}
\newtheorem{lem}[thm]{Lemma}
\newtheorem{cor}[thm]{Corollary}
\theoremstyle{definition}
\theoremstyle{remark}
\newtheorem{rem}[thm]{Remark}
\renewcommand{\epsilon}{\varepsilon}
\newcommand{\ric}{\operatorname{Ric}}
\newcommand{\cut}{\mathrm{Cut}\,}
\newcommand{\bm}{\partial M}
\newcommand{\second}{{\rm I\hspace{-.01em}I}}
\title[Yau and Souplet-Zhang type gradient estimates]{Yau and Souplet-Zhang type gradient estimates\\ on Riemannian manifolds with boundary\\ under Dirichlet boundary condition}
\author{Keita Kunikawa}
\address{Cooperative Faculty of Education, Utsunomiya University, 350 Mine-Machi, Utsunomiya, 321-8505, Japan}
\email{kunikawa@cc.utsunomiya-u.ac.jp}
\author{Yohei Sakurai}
\address{Department of Mathematics, Saitama University, 255 Shimo-Okubo, Sakura-ku, Saitama-City, Saitama, 338-8570, Japan}
\email{ysakurai@rimath.saitama-u.ac.jp}
\subjclass[2010]{Primary 53C20; Secondly 31C05, 35K05, 35B40, 58J35}
\keywords{Harmonic function; Heat equation; Gradient estimate; Liouville theorem}
\date{July 28, 2021}
\begin{document}
\maketitle

\begin{abstract}
In this paper,
on Riemannian manifolds with boundary,
we establish a Yau type gradient estimate and Liouville theorem for harmonic functions under Dirichlet boundary condition.
Under a similar setting,
we also formulate a Souplet-Zhang type gradient estimate and Liouville theorem for ancient solutions to the heat equation.
\end{abstract}

\section{Introduction}

\subsection{Elliptic and parabolic gradient estimates}

In geometric analysis,
one of the main problems is to estimate the gradient of solutions to elliptic or parabolic equations on Riemannian manifolds under curvature bounds.
Yau \cite{Y} has established the following gradient estimate for harmonic functions (see \cite{Y}, and also \cite{CY}):
\begin{thm}[\cite{Y}, \cite{CY}]\label{thm:Yau}
Let $(M,g)$ be an $n$-dimensional, complete Riemannian manifold (without boundary).
For $K \geq 0$,
we assume $\ric_{M} \geq -(n-1)K$.
For $x_0\in M$,
let $u:B_{R}(x_0)\to (0,\infty)$ be a positive harmonic function.
Then we have
\begin{equation*}
\frac{\Vert \nabla u \Vert}{u}\leq C_n\left(\frac{1}{R}+\sqrt{K}  \right)
\end{equation*}
on $B_{R/2}(x_0)$,
where $C_n$ is a positive constant depending only on $n$.
\end{thm}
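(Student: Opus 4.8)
The plan is to run the classical Bochner-technique argument on the logarithm of $u$, combined with a cutoff function and the maximum principle. First I would set $f = \log u$ on $B_R(x_0)$; since $u$ is positive and harmonic, a direct computation gives $\Delta f = \Delta u / u - \Vert \nabla u\Vert^2/u^2 = -\Vert \nabla f \Vert^2$. Writing $w = \Vert \nabla f \Vert^2 = \Vert \nabla u \Vert^2 / u^2$, the goal becomes the pointwise bound $w \leq C_n(1/R^2 + K)$ on $B_{R/2}(x_0)$, after which the stated estimate follows by taking square roots.

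The analytic heart is the Bochner formula applied to $f$:
\[
\tfrac{1}{2}\Delta \Vert \nabla f \Vert^2 = \Vert \Hess f \Vert^2 + \langle \nabla f, \nabla \Delta f \rangle + \ric(\nabla f, \nabla f).
\]
Substituting $\Delta f = -w$, invoking the curvature hypothesis $\ric_M \geq -(n-1)K$, and using the Newton--Cauchy-Schwarz inequality $\Vert \Hess f \Vert^2 \geq (\Delta f)^2/n = w^2/n$, I would obtain the key differential inequality
\[
\tfrac{1}{2}\Delta w \geq \frac{w^2}{n} - \langle \nabla f, \nabla w \rangle - (n-1)K\, w.
\]

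Next I would localize. Let $\varphi$ be a cutoff depending on the distance $r(\cdot) = d(x_0, \cdot)$, with $\varphi \equiv 1$ on $B_{R/2}(x_0)$, $\supp \varphi \subset B_R(x_0)$, and the standard bounds $\Vert \nabla \varphi \Vert^2/\varphi \leq C/R^2$ and $\Delta \varphi \geq -C(1/R^2 + \sqrt{K}/R)$; the lower Laplacian bound here is exactly where the Ricci lower bound feeds in, through the Laplacian comparison theorem. I would then examine $\varphi w$ at an interior maximum point $p$. At $p$ one has $\nabla(\varphi w) = 0$, hence $w\,\nabla \varphi = -\varphi\,\nabla w$, and $\Delta(\varphi w) \leq 0$. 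Feeding the differential inequality for $w$ and the bounds on $\varphi$ into $0 \geq \Delta(\varphi w)$ and multiplying through by $\varphi$ yields a quadratic inequality in $(\varphi w)(p)$ whose leading term is $(\varphi w)^2/n$, from which $(\varphi w)(p) \leq C_n(1/R^2 + K)$ follows. Since $\varphi \equiv 1$ on $B_{R/2}(x_0)$, the same bound holds for $w$ there.

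The main obstacle, and the point demanding the most care, is the cutoff bookkeeping: the cross term $\langle \nabla f, \nabla w \rangle$ must be re-expressed at the maximum point via $\varphi\,\nabla w = -w\,\nabla \varphi$ and then absorbed by Young's inequality, so that the resulting $\Vert \nabla \varphi \Vert^2/\varphi$ and $\Delta \varphi$ contributions stay of order $1/R^2 + K$ without spoiling the positive $w^2/n$ term. A secondary technical issue is that $r$ is only Lipschitz across the cut locus of $x_0$, so the Laplacian comparison and the maximum principle must be applied in the barrier (support-function) sense via Calabi's trick, or else one restricts attention to points where $r$ is smooth. Once these are handled, solving the resulting quadratic inequality is routine.
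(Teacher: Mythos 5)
Your outline is correct: it is the classical Cheng--Yau/Souplet--Zhang route (Bochner applied to $f=\log u$, the crude Hessian bound $\Vert \Hess f\Vert^2\geq (\Delta f)^2/n$ with $\Delta f=-w$, a cutoff, Calabi's trick at the cut locus, and a quadratic inequality in $\varphi w$ at the maximum point). The one step you compress --- absorbing $2\varphi w^{3/2}\Vert\nabla\varphi\Vert$ into the $w^2/n$ term --- does work, but only via the Young inequality with exponents $4/3$ and $4$, which is why the cutoff bound must be of the form $\Vert\nabla\varphi\Vert^2/\varphi\leq C/R^2$ (so that $\Vert\nabla\varphi\Vert^4/\varphi^2\leq C/R^4$); you state exactly the right hypotheses, so this closes. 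Be aware, though, that the paper does not prove Theorem \ref{thm:Yau} at all: it is quoted from Yau and Cheng--Yau. The nearest in-paper analogue is the interior case of Theorem \ref{thm:main1}, and there the authors take a genuinely different (equally classical) route. They work with the first power $\phi=\Vert\nabla u\Vert/u$ rather than $w=\Vert\nabla\log u\Vert^2$, and their differential inequality (Proposition \ref{prop:useful}) comes from Bochner \emph{plus the refined Kato inequality} for harmonic functions, which produces the cubic term $\phi^3/(n-1)$ and a cross term $\frac{2(n-2)}{n-1}\frac{g(\nabla\phi,\nabla u)}{u}$. They also use the explicit polynomial cutoff $R^2-\rho^2$, whose gradient and Laplacian are computed exactly from the Laplacian comparison; at the maximum point the cross term is then bounded directly by $\frac{2\rho}{R^2-\rho^2}\phi$, so no fractional-exponent Young inequality is needed and the quadratic in $F=(R^2-\rho^2)\phi$ falls out immediately. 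Your approach is more robust (it is the one that generalizes to the parabolic Theorem \ref{thm:main2}); the paper's approach for the elliptic case is slightly cleaner in its bookkeeping and exploits harmonicity more strongly through the Kato refinement. Both yield the stated bound $C_n(1/R+\sqrt{K})$.
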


Inspired by Theorem \ref{thm:Yau} and the work of Hamilton \cite{H},
Souplet-Zhang \cite{SZ} have obtained the following parabolic version (see \cite[Theorem 1.1]{SZ}):
\begin{thm}[\cite{SZ}]\label{thm:SZ}
Let $(M,g)$ be an $n$-dimensional, complete Riemannian manifold (without boundary).
For $K \geq 0$,
we assume $\ric_{M} \geq -(n-1)K$.
For $x_0\in M$,
let $u$ be a positive solution to the heat equation on
\begin{equation*}
Q_{R,T}(x_0):=B_{R}(x_0)\times [-T,0].
\end{equation*}
For $A>0$, we assume $u< A$.
Then we have
\begin{equation*}
\frac{\Vert \nabla u \Vert}{u}\leq C_n \left( \frac{1}{R}+\frac{1}{\sqrt{T}}+\sqrt{K}  \right) \left( 1+\log \frac{A}{u}  \right)
\end{equation*}
on $Q_{R/2,T/4}(x_0)$.
\end{thm}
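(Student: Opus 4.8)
The plan is to run the logarithmic maximum-principle argument of Souplet--Zhang. First I would set $f := \log(u/A)$; since $0<u<A$ we have $f<0$, and the heat equation $\partial_t u=\Delta u$ becomes $\partial_t f=\Delta f+\Vert\nabla f\Vert^2$ on $Q_{R,T}(x_0)$. The asserted bound reads $\Vert\nabla f\Vert\le C_n\bigl(\tfrac1R+\tfrac1{\sqrt T}+\sqrt K\bigr)(1-f)$, so it is natural to study the auxiliary quantity $w:=\Vert\nabla f\Vert^2/(1-f)^2$ and to aim at $w\le C_n^2\bigl(\tfrac1{R^2}+\tfrac1T+K\bigr)$ on $Q_{R/2,T/4}(x_0)$; indeed $\Vert\nabla u\Vert/u=\Vert\nabla f\Vert=(1-f)\sqrt w$ and $1-f=1+\log(A/u)$ then give exactly the statement.

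Next I would derive a parabolic differential inequality for $w$. Feeding Bochner's formula $\tfrac12\Delta\Vert\nabla f\Vert^2=\Vert\Hess f\Vert^2+\langle\nabla f,\nabla\Delta f\rangle+\ric(\nabla f,\nabla f)$, the evolution of $f$, and the curvature bound $\ric_M\ge-(n-1)K$ into a direct computation and re-expressing everything through $w$, I expect an inequality of the form
\[
(\Delta-\partial_t)\,w\ge\frac{2f}{1-f}\langle\nabla f,\nabla w\rangle+2(1-f)\,w^2-2(n-1)K\,w .
\]
The crucial feature is that $f<0$ makes the coefficient $1-f\ge1$ of the quadratic term strictly positive, so $2(1-f)w^2$ is a genuinely favourable term that will eventually dominate the others.

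Then I would localize. Choose $\psi(x,t)=\varphi(d(x,x_0))\,\chi(t)$ with $\varphi\equiv1$ on $[0,R/2]$, $\varphi\equiv0$ off $[0,R]$, $|\varphi'|^2/\varphi\le C/R^2$, $|\varphi''|\le C/R^2$, and $\chi(-T)=0$, $\chi\equiv1$ for $t\ge-T/4$, $|\chi'|\le C/T$. Let $(x_1,t_1)$ be a maximum of $G:=\psi w$ on the compact cylinder; since $\chi(-T)=0$ forces $G=0$ on the initial slice and $\varphi$ vanishes on $\partial B_R(x_0)$, either $G\le0$ (and we are done, as $w\ge0$) or the maximum is interior with $t_1>-T$, where $\nabla G=0$, $\Delta G\le0$, $\partial_t G\ge0$. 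Expanding $(\Delta-\partial_t)(\psi w)$, substituting the inequality for $w$, using $\psi\nabla w=-w\nabla\psi$ and the Laplacian comparison $\Delta d\le(n-1)\bigl(\tfrac1d+\sqrt K\bigr)$ to control $\Delta\psi$, and absorbing every cutoff and gradient term into $2(1-f)\psi w^2\ge2\psi w^2$ by Young's inequality, I obtain a quadratic inequality
\[
c\,G(x_1,t_1)^2\le C_n\Bigl(\frac{1}{R^2}+\frac{1}{T}+K\Bigr)G(x_1,t_1).
\]
Hence $G\le C_n\bigl(\tfrac1{R^2}+\tfrac1T+K\bigr)$ everywhere, and restricting to $Q_{R/2,T/4}(x_0)$, where $\psi\equiv1$, completes the argument.

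The hard part is the Bochner step: obtaining the differential inequality for $w$ with precisely the right sign and a positive coefficient on $w^2$, since this is exactly what makes the localization close. A secondary technical issue is the non-smoothness of $d(\cdot,x_0)$ along the cut locus, where $\Delta d$ and hence $\Delta\psi$ are not classically defined; I would circumvent this by Calabi's trick (or by upper barriers) so that the maximum-principle computation remains valid at $x_1$.
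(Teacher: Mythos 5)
Your proposal is the standard Souplet--Zhang argument, and it matches the approach the paper itself relies on: the paper quotes this theorem from \cite{SZ} without reproving it, but its own Lemmas \ref{lem:easy} and \ref{lem:simple} (the differential inequality for $w=\Vert\nabla f\Vert^2/(1-f)^2$ with the favourable $2(1-f)w^2$ term), the cutoff construction of Lemma \ref{lem:cutoff}, the Young-inequality absorption in Lemma \ref{lem:difficult}, and the Calabi-trick maximum-principle step in the proof of Theorem \ref{thm:main2} are exactly the boundary adaptation of the steps you describe. Your sketch is correct and essentially identical in structure, differing only in inessential bookkeeping (e.g.\ closing the argument via $cG^2\leq C\,G$ rather than bounding $(\psi w)^2$ directly by a constant).
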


\subsection{Gradient estimates on manifolds with boundary}
The aim of this article is to formulate analogues of Theorems \ref{thm:Yau} and \ref{thm:SZ} on Riemannian manifolds with (smooth) boundary under Dirichlet boundary condition.
Let $(M,g)$ be a complete Riemannian manifold with boundary $\bm$.
For $R>0$,
we denote by $B_{R}(\bm)$ the $R$-neighborhood of $\bm$.
We also denote by $H_{\bm}$ the infimum of the mean curvature over $\bm$,
where the mean curvature is defined as the trace of the shape operator for the unit inner normal vector on $\bm$. 
For harmonic functions with Dirichlet boundary condition (i.e., it is constant on the boundary),
we prove the following gradient estimate on a neighborhood of the boundary under a lower Ricci curvature bound, and a lower mean curvature bound for the boundary:
\begin{thm}\label{thm:main1}
Let $(M,g)$ be an $n$-dimensional, complete Riemannian manifold with compact boundary.
For $K \geq 0$,
we assume $\ric_{M} \geq -(n-1)K$ and $H_{\partial M}\geq -(n-1)\sqrt{K}$.
Let $u:B_{R}(\partial M)\to (0,\infty)$ be a positive harmonic function with Dirichlet boundary condition.
We assume that its derivative $u_{\nu}$ in the direction of the outward unit normal vector $\nu$ is non-negative over $\bm$.
Then we have
\begin{equation*}
\frac{\Vert \nabla u \Vert}{u}\leq C_n\left(\frac{1}{R}+\sqrt{K}  \right)
\end{equation*}
on $B_{R/2}(\partial M)$.
\end{thm}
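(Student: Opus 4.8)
The plan is to run the classical Yau argument for $f:=\log u$, but localized by a cutoff built from the distance to $\bm$, and to handle separately the case where the governing maximum sits on the boundary. Since $u$ is harmonic we have $\Delta f=-w$ for $w:=\Vert\nabla u\Vert^{2}/u^{2}=\Vert\nabla f\Vert^{2}$, and the Bochner formula, the inequality $\Vert\Hess f\Vert^{2}\geq(\Delta f)^{2}/n=w^{2}/n$, and $\ric_{M}\geq-(n-1)K$ give
\begin{equation*}
\tfrac12\,\Delta w\geq\tfrac1n\,w^{2}-\langle\nabla f,\nabla w\rangle-(n-1)K\,w.
\end{equation*}
Writing $\rho:=d(\cdot,\bm)$, I would fix a cutoff $\phi=\phi(\rho)$ with $\phi\equiv1$ on $[0,R/2]$, $\phi\equiv0$ near $\rho=R$, $\phi'\leq0$, $|\phi'|\leq C/R$ and $|\phi''|\leq C/R^{2}$, and study the maximum of $G:=\phi\,w$ over the compact set $\overline{B_{R}(\bm)}$, whose topological boundary consists of $\bm$ and the level set $\{\rho=R\}$, on which $G\equiv0$.

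The geometric input is a Laplacian comparison for $\rho$. Along a unit-speed geodesic issuing normally from $\bm$, the Riccati equation together with $\ric_{M}\geq-(n-1)K$ shows that $m:=\Delta\rho$ satisfies $m'\leq-\tfrac1{n-1}m^{2}+(n-1)K$ with initial value $m(0)=\Delta\rho|_{\bm}=-H_{\bm}\leq(n-1)\sqrt K$. Since $(n-1)\sqrt K$ is precisely the equilibrium of this comparison ODE, ODE comparison yields
\begin{equation*}
\Delta\rho\leq(n-1)\sqrt K
\end{equation*}
on $B_{R}(\bm)$, in the barrier sense past the cut locus of $\bm$. This is exactly the step that consumes the hypothesis $H_{\bm}\geq-(n-1)\sqrt K$, and the matching of the two bounds is what makes the comparison function constant.

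If the maximum of $G$ is attained at an interior point $q$ with $0<\rho(q)<R$ (treating the cut locus by Calabi's barrier argument), I would use $\nabla G(q)=0$ and $\Delta G(q)\leq0$. Substituting $\nabla w=-(w/\phi)\nabla\phi$, inserting $\Delta\phi=\phi'\Delta\rho+\phi''\geq-(n-1)\sqrt K\,|\phi'|-|\phi''|$ and the Bochner inequality, and absorbing the term $\langle\nabla f,\nabla\rho\rangle$ by Young's inequality, one arrives at the usual bound $G(q)\leq C_{n}(R^{-2}+K)$; as $\phi\equiv1$ on $B_{R/2}(\bm)$ this gives the claim there. The remaining possibility is a boundary maximum at $p\in\bm$. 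Since $p$ maximizes $G$ over $\overline{B_{R}(\bm)}$ and the region lies on the inward side, $\partial_{\nu}G(p)\geq0$, and because $\phi\equiv1$ near $\bm$ this reads $\partial_{\nu}w(p)\geq0$. The Dirichlet condition forces $\nabla u=u_{\nu}\nu$ on $\bm$, so with $a:=u_{\nu}/u\geq0$ we have $w=a^{2}$; decomposing $0=\Delta u$ along $\bm$ gives $\Hess u(\nu,\nu)=-H_{\bm}u_{\nu}$, whence $\partial_{\nu}w(p)=2\Hess f(\nu,\nabla f)=2a\,\Hess f(\nu,\nu)=-2a^{2}(H_{\bm}+a)$. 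Thus $a^{2}(H_{\bm}+a)\leq0$, and with $a\geq0$ and $H_{\bm}\geq-(n-1)\sqrt K$ this forces $a\leq-H_{\bm}\leq(n-1)\sqrt K$, so $w(p)=a^{2}\leq(n-1)^{2}K$ and $G\leq(n-1)^{2}K$; in particular $\Vert\nabla u\Vert/u\leq(n-1)\sqrt K$ on $B_{R/2}(\bm)$.

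I expect the main obstacle to be the boundary-maximum case rather than the interior estimate: it is here that both the sign hypothesis $u_{\nu}\geq0$ and the mean curvature bound enter decisively, through the identity $\Hess u(\nu,\nu)=-H_{\bm}u_{\nu}$ and the resulting sign of $\partial_{\nu}w$. A secondary technical point is the non-smoothness of $\rho$ on the cut locus of $\bm$, which I would circumvent in the standard way by upper barriers preserving the comparison $\Delta\rho\leq(n-1)\sqrt K$.
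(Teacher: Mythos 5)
Your proposal is correct and follows essentially the same strategy as the paper: a localized maximum-principle argument using the Laplacian comparison $\Delta\rho_{\bm}\le(n-1)\sqrt K$, Calabi's trick at the cut locus for the interior case, and a separate boundary-maximum case settled by the sign of the normal derivative of the test quantity, where the decomposition $\Hess u(\nu,\nu)=-Hu_{\nu}$ (which the paper phrases via the Reilly-type formula applied to $\log u$) combines with $u_{\nu}\ge 0$ to force $u_{\nu}/u\le -H\le(n-1)\sqrt K$. The only cosmetic differences are that the paper maximizes $F=(R^{2}-\rho_{\bm}^{2})\,\Vert\nabla u\Vert/u$ and invokes the refined Kato inequality (its Proposition~\ref{prop:useful}, with the $\frac{1}{n-1}\phi^{3}$ term) rather than your smooth cutoff times $\Vert\nabla\log u\Vert^{2}$ with the plain Bochner bound $\Vert\Hess f\Vert^{2}\ge(\Delta f)^{2}/n$; both versions close in the standard way, provided your cutoff is chosen so that quantities like $\vert\phi'\vert/\phi^{3/4}$ remain controlled when absorbing the $w^{3/2}$ term by Young's inequality.
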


Further,
for solutions to the heat equation with Dirichlet boundary condition,
we obtain the following gradient estimate under the non-negativity of mean curvature on the boundary:
\begin{thm}\label{thm:main2}
Let $(M,g)$ be an $n$-dimensional, complete Riemannian manifold with compact boundary.
For $K \geq 0$,
we assume $\ric_{M} \geq -(n-1)K$ and $H_{\partial M}\geq 0$.
Let $u$ be a positive solution to the heat equation on
\begin{equation*}
Q_{R,T}(\bm):=B_{R}(\bm)\times [-T,0].
\end{equation*}
For $A>0$,
let us assume $u<A$.
We further assume that $u$ satisfies the Dirichlet boundary condition $($i.e., $u(\cdot,t)|_{\bm}$ is constant for each fixed $t\in [-T,0]$$)$, and $u_{\nu}\geq 0$ and $\partial_t u\leq 0$ over $\bm \times [-T,0]$.
Then we have
\begin{equation*}
\frac{\Vert \nabla u \Vert}{u}\leq C_n \left( \frac{1}{R}+\frac{1}{\sqrt{T}}+\sqrt{K}  \right) \left( 1+\log \frac{A}{u}  \right)
\end{equation*}
on $Q_{R/2,T/4}(\bm)$.
\end{thm}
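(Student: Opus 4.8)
The plan is to adapt the Souplet--Zhang argument \cite{SZ}, the essential new input being the control of the relevant quantity on $\bm$ through the four boundary hypotheses. Set $f:=\log(u/A)$, so that $f<0$ on $Q_{R,T}(\bm)$ and the heat equation for $u$ becomes the identity $\Delta f=\partial_t f-\Vert\nabla f\Vert^2$. Introduce
$$w:=\frac{\Vert\nabla f\Vert}{1-f},$$
and observe that $w=\tfrac{\Vert\nabla u\Vert/u}{1+\log(A/u)}$, so that the theorem is equivalent to the bound $w\le C_n(1/R+1/\sqrt{T}+\sqrt{K})$ on $Q_{R/2,T/4}(\bm)$. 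Everything reduces to a localized maximum principle for $w$.

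I record the interior differential inequality first. Applying the Bochner formula to $f$, using the identity above together with $\ric_M\ge-(n-1)K$, a computation in the spirit of Souplet--Zhang yields, on the interior of $B_R(\bm)$, an inequality of the form
$$\Delta w-\partial_t w\ \ge\ \frac{2f}{1-f}\,\langle\nabla f,\nabla w\rangle+2(1-f)\,w^{3}-2(n-1)K\,w,$$
in which $2(1-f)w^{3}$ is the coercive term that ultimately forces the bound, the first term is an admissible drift, and the last is the Ricci error.

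For the localization, let $\rho:=d(\cdot,\bm)$ and choose a smooth cutoff $\psi=\psi(\rho)$ with $\psi\equiv1$ for $\rho\le R/2$ (in particular near $\bm$), $\psi\equiv0$ for $\rho\ge R$, $\psi'\le0$, and $|\psi'|^2/\psi,\ |\psi''|\le C_n/R^2$, together with a temporal cutoff $\eta(t)$ equal to $1$ on $[-T/4,0]$ and vanishing near $t=-T$ with $|\eta'|\le C_n/T$. The Laplacian comparison for the distance to the boundary, valid under $\ric_M\ge-(n-1)K$ and $H_{\partial M}\ge0\ge-(n-1)\sqrt{K}$, controls $\Delta\rho$ from above on the region $\{R/2\le\rho\le R\}$ where $\psi'\ne0$, whence $\Delta\psi\ge-C_n(1/R^2+\sqrt{K}/R)$ there; nonsmoothness of $\rho$ on the cut locus of $\bm$ is absorbed by Calabi's barrier trick. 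Setting $F:=\eta\psi w$ and inspecting a point $p$ at which $F$ attains its maximum over $\overline{B_R(\bm)}\times[-T,0]$, the cutoffs kill the contributions from $\{\rho=R\}$ and from $t=-T$; at an interior $p$ one has $\nabla F=0$, $\Delta F\le0$, $\partial_t F\ge0$, and feeding the differential inequality into a now-standard manipulation that balances the cubic term against the cutoff errors gives $F(p)\le C_n(1/R+1/\sqrt{T}+\sqrt{K})$, which is the asserted bound on $Q_{R/2,T/4}(\bm)$.

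The main obstacle is ensuring that the maximum $p$ does not arise from $\bm$, and this is where all four boundary hypotheses enter. Since $u(\cdot,t)|_{\bm}$ is constant, on $\bm$ the tangential gradient of $f$ vanishes, so $\nabla f=f_\nu\,\nu$ with $f_\nu=u_\nu/u\ge0$ and $\Vert\nabla f\Vert=f_\nu$, and moreover $\Delta_{\bm}f=0$; the trace decomposition of the Laplacian then gives $\Hess f(\nu,\nu)=\Delta f-H_{\partial M}f_\nu$. Combining this with $\Delta f=\partial_t f-f_\nu^2$, with $\partial_t f=\partial_t u/u\le0$, with $f<0$, and with $H_{\partial M}\ge0$, a direct computation of the outward normal derivative yields
$$\partial_\nu w=\frac{\Hess f(\nu,\nu)(1-f)+f_\nu^2}{(1-f)^2}=\frac{\partial_t f\,(1-f)+f_\nu^2\,f-H_{\partial M}f_\nu(1-f)}{(1-f)^2}\le0\qquad\text{on }\bm.$$
Because $\psi\equiv1$ near $\bm$ forces $\partial_\nu\psi=0$, this gives $\partial_\nu F\le0$ on $\bm\times[-T,0]$, whereas a maximum of $F$ on $\bm$ requires $\partial_\nu F(p)\ge0$; hence the boundary produces no adverse contribution, and the maximum is either interior or occurs at a boundary point with $\partial_\nu F(p)=0$, where the interior computation applies verbatim. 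Verifying this normal-derivative sign, together with the distance-function regularity, is the technical heart of the proof; the strengthened hypothesis $H_{\partial M}\ge0$ (rather than the weaker $H_{\partial M}\ge-(n-1)\sqrt{K}$ sufficient in Theorem \ref{thm:main1}) is precisely what the parabolic boundary term demands.
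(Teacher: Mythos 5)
Your proposal is correct and follows essentially the same route as the paper: a Souplet--Zhang localization with a cutoff in $\rho_{\bm}$ and $t$, the Kasue Laplacian comparison plus Calabi's trick at $\cut\bm$, and the four boundary hypotheses (Dirichlet condition, $u_\nu\ge 0$, $\partial_t u\le 0$, $H_{\partial M}\ge 0$) used to give the normal derivative of $w$ a sign on $\bm$ --- your direct trace-decomposition computation is exactly the identity the paper extracts from its Reilly-type formula (Proposition \ref{prop:reilly}) applied to $\log(1-f)$. The only points to tighten are that the paper works with the squared quantity $w=\Vert\nabla f\Vert^{2}/(1-f)^{2}$ to avoid differentiating $\Vert\nabla f\Vert$ at zeros of $\nabla f$ (and your displayed differential inequality drops the $\Vert\nabla w\Vert^{2}/w$ term that this substitution produces, harmless at the maximum point but not literally ``of the form'' stated), and that at a boundary maximum with $\partial_\nu F=0$ the claim that the interior computation applies ``verbatim'' needs the extra observation that the one-sided second normal derivative and the tangential Laplacian are both nonpositive there, whereas the paper sidesteps this by concluding $w=0$ at such a point and deriving a contradiction.
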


\subsection{Liouville theorems}

Theorem \ref{thm:main1} together with the standard argument leads to the following Liouville theorem:
\begin{cor}\label{cor:Liouville1}
Let $(M,g)$ be a complete Riemannian manifold with compact boundary.
We assume $\ric_{M} \geq 0$ and $H_{\partial M}\geq 0$.
Let $u:M\to (0,\infty)$ be a positive harmonic function with Dirichlet boundary condition.
We assume $u_{\nu}\geq 0$ over $\bm$.
Then $u$ is constant.
\end{cor}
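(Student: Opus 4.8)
The plan is to reduce the statement to Theorem \ref{thm:main1} and let the radius tend to infinity. Since $\ric_{M}\geq 0$, we may take $K=0$ in that theorem; the boundary hypothesis $H_{\partial M}\geq -(n-1)\sqrt{K}$ then reads $H_{\partial M}\geq 0$, which is exactly our assumption, and the sign condition $u_{\nu}\geq 0$ is assumed as well. Because $u$ is a positive harmonic function on the whole of $M$ satisfying the Dirichlet boundary condition, its restriction to each metric neighborhood $B_{R}(\partial M)$ satisfies all the hypotheses of Theorem \ref{thm:main1} for every $R>0$.

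First I would record the resulting estimate. Applying Theorem \ref{thm:main1} with $K=0$ on $B_{R}(\partial M)$ gives
\begin{equation*}
\frac{\Vert \nabla u \Vert}{u}\leq \frac{C_n}{R}
\quad\text{on } B_{R/2}(\partial M),
\end{equation*}
with the same dimensional constant $C_n$ for every $R$; the crucial feature is that the right-hand side is independent of $u$ and decays to $0$ as $R\to\infty$.

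Next I would fix an arbitrary point $x\in M$ and pass to the limit. Since $\partial M$ is compact and $M$ is connected, the distance $d(x,\partial M)$ is finite, so $x\in B_{R/2}(\partial M)$ for all $R>2\,d(x,\partial M)$. Evaluating the estimate above at $x$ and letting $R\to\infty$ forces $\Vert \nabla u\Vert(x)=0$. As $x$ was arbitrary, we obtain $\nabla u\equiv 0$ on $M$, and hence $u$ is constant by connectedness.

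I do not expect a serious obstacle here, as this is the standard Yau-type limiting argument; the only points requiring care are that $C_n$ be genuinely independent of $R$ (so that the bound actually tends to $0$), and that every point of $M$ eventually lie in $B_{R/2}(\partial M)$. The latter is guaranteed by compactness of $\partial M$ together with connectedness of $M$, which ensure that $d(\cdot,\partial M)<\infty$ everywhere, so that the family $\{B_{R/2}(\partial M)\}_{R>0}$ exhausts $M$.
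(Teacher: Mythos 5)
Your proposal is correct and is precisely the ``standard argument'' the paper invokes for this corollary: apply Theorem \ref{thm:main1} with $K=0$ on $B_{R}(\partial M)$ to get $\Vert\nabla u\Vert/u\leq C_n/R$ on $B_{R/2}(\partial M)$, then let $R\to\infty$ and use that the neighborhoods $B_{R/2}(\partial M)$ exhaust $M$. No gaps; the points you flag (uniformity of $C_n$ in $R$ and finiteness of $d(\cdot,\partial M)$) are exactly the right ones to check.
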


\begin{rem}\label{rem:assumption}
Without the assumption $u_{\nu}\geq 0$,
this type of Liouville theorem does not hold.
Actually,
if $M=[0,\infty)$ and $u(x)=1+x$,
then it is a non-trivial positive harmonic function.
\end{rem}

Theorem \ref{thm:main2} together with a similar argument of the proof of \cite[Theorem 1.2]{SZ} tells us the following:
\begin{cor}\label{cor:Liouville2}
Let $(M,g)$ be a complete Riemannian manifold with compact boundary.
We assume $\ric_{M} \geq 0$ and $H_{\partial M}\geq 0$.
Then we have the following:
\begin{enumerate}\setlength{\itemsep}{+0.7mm}
\item Let $u:M\times (-\infty,0]\to (0,\infty)$ be a positive ancient solution to the heat equation with Dirichlet boundary condition,
and $u_{\nu}\geq 0$ and $\partial_t u\leq 0$ over $\bm \times (-\infty,0]$.
If
\begin{equation*}
u(x,t)=\exp\left[o\left(\rho_{\bm}(x)+\sqrt{\vert t\vert}\right)\right]
\end{equation*}
near infinity, then $u$ must be constant.
Here $\rho_{\bm}(x)$ denotes the Riemannian distance from the boundary;
\item let $u:M\times (-\infty,0]\to \mathbb{R}$ be an ancient solution to the heat equation with Dirichlet boundary condition,
and $u_{\nu}\geq 0$ and $\partial_t u\leq 0$ over $\bm \times (-\infty,0]$.
If
\begin{equation*}
u(x,t)=o\left(\rho_{\bm}(x)+\sqrt{\vert t\vert}\right)
\end{equation*}
near infinity, then $u$ is constant. 
\end{enumerate}
\end{cor}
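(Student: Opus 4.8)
The plan is to deduce both statements from the Souplet--Zhang type estimate of Theorem \ref{thm:main2}, following the scheme used by Souplet--Zhang for \cite[Theorem 1.2]{SZ}. Since $\ric_M \ge 0$ and $H_{\partial M}\ge 0$, I may take $K=0$ in Theorem \ref{thm:main2}, so that on each parabolic region $Q_{R/2,T/4}(\bm)$ and for each upper bound $A$ of $u$ there,
\[
\frac{\Vert \nabla u\Vert}{u} \le C_n\left(\frac1R + \frac1{\sqrt T}\right)\left(1 + \log\frac{A}{u}\right).
\]
I fix once and for all a point $(x_0,t_0)\in M\times(-\infty,0]$; since $\bm$ is compact we have $\rho_{\bm}(x_0)<\infty$, so $(x_0,t_0)\in Q_{R/2,T/4}(\bm)$ as soon as $R>2\rho_{\bm}(x_0)$ and $T>4\vert t_0\vert$. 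The goal is to show $\Vert\nabla u\Vert(x_0,t_0)=0$; as $(x_0,t_0)$ is arbitrary this forces $u$ to be spatially constant, and then the heat equation $\partial_t u=\Delta u=0$ makes $u$ constant in time as well.

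For part (1), $u$ is already positive. The growth hypothesis $u=\exp[o(\rho_{\bm}+\sqrt{\vert t\vert})]$ gives, on $Q_{R,T}(\bm)$, an upper bound $A=A(R,T)$ with $\log A = o(R+\sqrt T)$. Plugging this $A$ into the displayed estimate at $(x_0,t_0)$ and choosing $R=\sqrt T\to\infty$, the right-hand side behaves like $\tfrac{2C_n}{R}\bigl(1+\log A-\log u(x_0,t_0)\bigr)$, in which $\log A/R\to 0$ and the remaining terms are $O(1/R)$. Hence $\Vert\nabla u\Vert(x_0,t_0)=0$.

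For part (2), $u$ need not be positive, so first I would make it positive by a constant shift. On $Q_{R,T}(\bm)$ set $\tilde A:=\sup_{Q_{R,T}(\bm)}\vert u\vert$, which by the sublinear growth $u=o(\rho_{\bm}+\sqrt{\vert t\vert})$ satisfies $\tilde A=o(R+\sqrt T)$ (if $\tilde A=0$ then $u\equiv 0$ and we are done). The function $v:=u+2\tilde A$ is then a positive solution with $\tilde A\le v<3\tilde A$ on $Q_{R,T}(\bm)$, and because a constant has vanishing gradient, vanishing normal derivative and vanishing time derivative, $v$ inherits from $u$ the Dirichlet condition together with $v_\nu\ge 0$ and $\partial_t v\le 0$. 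Applying Theorem \ref{thm:main2} to $v$ with $A=3\tilde A$, and using $\nabla v=\nabla u$ and $v(x_0,t_0)\ge\tilde A$, the logarithmic factor stays bounded, $1+\log\tfrac{3\tilde A}{v(x_0,t_0)}\le 1+\log 3$, so
\[
\Vert\nabla u\Vert(x_0,t_0)\le C_n\left(\tfrac1R+\tfrac1{\sqrt T}\right)(1+\log 3)\,v(x_0,t_0).
\]
Since $v(x_0,t_0)=u(x_0,t_0)+2\tilde A=O(1)+o(R+\sqrt T)$, taking $R=\sqrt T\to\infty$ sends the right-hand side to $0$, giving $\Vert\nabla u\Vert(x_0,t_0)=0$ as before.

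I expect the only real subtlety to be the constant-shift step of part (2): one must check that translating $u$ by the (region-dependent) constant $2\tilde A$ preserves every hypothesis of Theorem \ref{thm:main2} while leaving $\nabla u$ untouched, and that the bound $A=3\tilde A$ keeps the logarithmic factor uniformly bounded at the fixed point even as $\tilde A\to\infty$. Everything else is the standard device of freezing $(x_0,t_0)$, coupling $R$ and $T$, and letting them tend to infinity; the genuine analytic content lies entirely in Theorem \ref{thm:main2}, which is assumed here.
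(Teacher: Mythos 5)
Your argument is correct and is exactly the one the paper intends: the authors give no explicit proof of Corollary \ref{cor:Liouville2} but defer to the scheme of \cite[Theorem 1.2]{SZ}, namely applying Theorem \ref{thm:main2} with $K=0$ at a frozen point, coupling $R=\sqrt{T}\to\infty$, and in part (2) first shifting $u$ by $2\sup|u|$ so that the logarithmic factor stays bounded. The only cosmetic point is to take the upper bound slightly larger than the supremum (e.g.\ $A=3\tilde A+1$) so that the strict inequality $v<A$ required by Theorem \ref{thm:main2} holds.
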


\begin{rem}
Due to the splitting theorem by Kasue \cite[Theorem C]{K2},
under the same setting as in Corollaries \ref{cor:Liouville1} and \ref{cor:Liouville2},
if $M$ is non-compact,
then it is isometric to $[0,\infty)\times \bm$.
\end{rem}

\section{Preliminaries}
Here,
let $(M,g)$ be an $n$-dimensional, complete Riemannian manifold with boundary.

\subsection{Laplacian comparison on manifolds with boundary}
We first recall a Laplacian comparison theorem for the distance function from the boundary.
The distance function from the boundary $\rho_{\bm}:M\to \mathbb{R}$ is defined as 
\begin{equation*}
\rho_{\bm}:=d(\cdot,\bm),
\end{equation*}
which is smooth outside of the cut locus for the boundary $\cut \bm$ (see e.g., \cite[Section 3]{S} for the basics of the cut locus for the boundary).

For $K,\Lambda \in \mathbb{R}$,
we denote by $s_{K,\Lambda}(t)$ the unique solution to the Jacobi equation $\varphi''(t)+K\varphi(t)=0$ with initial conditions $\varphi(0)=1$ and $\varphi'(0)=-\Lambda$.
We have the following Laplacian comparison theorem (see \cite[Corollary 2.44]{K1}):
\begin{thm}[\cite{K1}]\label{thm:comparison}
For $K,\Lambda \in \mathbb{R}$,
we assume $\ric_{M} \geq (n-1)K$ and $H_{\partial M}\geq (n-1)\Lambda$.
Then we have
\begin{equation*}
\Delta \rho_{\partial M}\leq (n-1)\frac{s'_{K,\Lambda}(\rho_{\partial M})}{s_{K,\Lambda}(\rho_{\partial M})}
\end{equation*}
outside of $\cut \bm$.
\end{thm}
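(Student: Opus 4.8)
The statement is purely local, so the plan is to fix a point $p\notin\cut\bm$ and establish the pointwise inequality along the minimizing geodesic from $\bm$ to $p$; the classical Riccati comparison (in the spirit of Heintze--Karcher) then gives the theorem. First I would record that outside $\cut\bm$ the gradient $\nabla\rho_{\bm}$ is a unit vector field whose integral curves are unit-speed geodesics issuing orthogonally from $\bm$, and that $\Hess\rho_{\bm}(\nabla\rho_{\bm},\cdot)=0$. Consequently $\Delta\rho_{\bm}$ equals the trace of the restriction of $\Hess\rho_{\bm}$ to the orthogonal complement of $\nabla\rho_{\bm}$, i.e. (up to sign) the mean curvature of the level hypersurface $\{\rho_{\bm}=\mathrm{const}\}$ with respect to $\nabla\rho_{\bm}$.

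Next, I would fix such a geodesic $\gamma$ with $\gamma(0)\in\bm$ and $\gamma'(0)$ inward, and let $S(t)$ denote the operator $\Hess\rho_{\bm}$ along $\gamma$ acting on $\gamma'(t)^{\perp}$. Along $\gamma$ it obeys the Riccati equation
\[
S'(t)+S(t)^2+R(t)=0, \qquad R(t)X=R(X,\gamma'(t))\gamma'(t).
\]
Setting $m(t):=\tr S(t)=\Delta\rho_{\bm}(\gamma(t))$ and taking traces gives $m'+\tr(S^2)+\ric(\gamma',\gamma')=0$. The Cauchy--Schwarz inequality yields $\tr(S^2)\ge m^2/(n-1)$, and the hypothesis $\ric_M\ge(n-1)K$ gives $\ric(\gamma',\gamma')\ge(n-1)K$; hence
\[
m'(t)\le -\frac{m(t)^2}{n-1}-(n-1)K.
\]

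It then remains to compare $m$ with the model. I would set $h(t):=(n-1)\,s'_{K,\Lambda}(t)/s_{K,\Lambda}(t)$ and check directly from $\varphi''+K\varphi=0$ that $h$ satisfies the corresponding \emph{equality} $h'=-h^2/(n-1)-(n-1)K$ with $h(0)=(n-1)\,s'_{K,\Lambda}(0)/s_{K,\Lambda}(0)=-(n-1)\Lambda$. At $t=0$ the boundary value is $m(0)=\Delta\rho_{\bm}|_{\bm}=-H_{\bm}(\gamma(0))\le-(n-1)\Lambda=h(0)$, using $H_{\bm}\ge(n-1)\Lambda$. Writing $w:=m-h$, the two (in)equalities combine into the linear differential inequality $w'\le -\frac{m+h}{n-1}\,w$ with $w(0)\le0$, so Gr\"onwall's inequality forces $w\le0$, i.e. $m(t)\le h(t)$, as long as $s_{K,\Lambda}>0$ along $\gamma$. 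Evaluating at $t=\rho_{\bm}(p)$ gives exactly $\Delta\rho_{\bm}(p)\le(n-1)s'_{K,\Lambda}(\rho_{\bm}(p))/s_{K,\Lambda}(\rho_{\bm}(p))$.

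The main obstacle is the bookkeeping around the initial condition and the range of validity, rather than the ODE comparison itself: one must correctly identify $\Delta\rho_{\bm}|_{\bm}$ with $-H_{\bm}$ under the chosen sign convention for the mean curvature (the hypotheses are tailored precisely so that $m(0)\le h(0)$), and one must ensure the comparison is carried out only up to the cut distance, where $\rho_{\bm}$ is smooth and $s_{K,\Lambda}(\rho_{\bm})$ stays positive so that $h$ does not develop a pole before $\gamma$ reaches $p$.
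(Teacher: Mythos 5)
The paper does not prove this statement; it is quoted directly from Kasue \cite{K1} (Corollary 2.44 there), so there is no internal proof to compare against. Your Riccati-comparison argument is the standard proof of that result and is correct: the trace of the radial Riccati equation plus Cauchy--Schwarz gives $m'\le -m^2/(n-1)-(n-1)K$, the model quantity $h=(n-1)s'_{K,\Lambda}/s_{K,\Lambda}$ satisfies the corresponding equality with $h(0)=-(n-1)\Lambda\ge m(0)=-H_{\bm}(\gamma(0))$, and the Gr\"onwall step closes the comparison. The one loose end you flag --- that $s_{K,\Lambda}(\rho_{\bm})>0$ before the cut point --- is resolved by the comparison itself: if $s_{K,\Lambda}$ had a first zero $t_0$ strictly before the cut distance, then $h\to-\infty$ as $t\to t_0^-$ would force $m\to-\infty$, contradicting the smoothness of $\rho_{\bm}$ at $\gamma(t_0)$; hence no pole of $h$ occurs in the region where the estimate is asserted.
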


\begin{rem}
For $K\geq 0$,
we see $s_{-K,-\sqrt{K}}(t)=e^{\sqrt{K}t}$;
in particular,
\begin{equation*}
\frac{s'_{-K,-\sqrt{K}}(t)}{s_{-K,-\sqrt{K}}(t)}=\sqrt{K}.
\end{equation*}
On the other hand,
we possess $s_{-K,0}(t)=\cosh\sqrt{K}t$;
in particular,
\begin{equation*}
\frac{s'_{-K,0}(t)}{s_{-K,0}(t)}=\sqrt{K}\frac{\sinh \sqrt{K}t}{\cosh \sqrt{K}t}\leq \sqrt{K}.
\end{equation*}
\end{rem}

\subsection{Reilly type formula}

We next recall the following useful formula,
which is used in the standard proof of the so-called Reilly formula (see e.g., \cite{R}, \cite[Chapter 8]{Li}):
\begin{prop}[\cite{R}]\label{prop:reilly}
For all $\varphi \in C^{\infty}(M)$ we have
\begin{align*}
\left(\Vert \nabla \varphi \Vert^{2}\right)_{\nu}=2\,\varphi_{\nu} \left[\Delta \varphi-\Delta_{\bm} (\varphi|_{\bm})-\varphi_{\nu} H  \right]&+2\,g_{\bm}\left(\nabla_{\bm}(\varphi|_{\partial M}),\nabla_{\bm}\varphi_{\nu} \right)\\
&-2\,\second \left(\nabla_{\bm}(\varphi|_{\partial M}),\nabla_{\bm}(\varphi|_{\partial M})  \right), \notag
\end{align*}
where $H$ and $\second$ are the mean curvature and second fundamental form,
respectively.
\end{prop}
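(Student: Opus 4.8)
The plan is to prove this as a pointwise identity along $\bm$ by evaluating the normal derivative of $\Vert\nabla\varphi\Vert^2$ through the Hessian of $\varphi$ and then splitting into tangential and normal contributions. First I would record the elementary identity
\begin{equation*}
\left(\Vert\nabla\varphi\Vert^2\right)_\nu = \nu\left(g(\nabla\varphi,\nabla\varphi)\right) = 2\,g(\nabla_\nu\nabla\varphi,\nabla\varphi) = 2\,\Hess\varphi(\nu,\nabla\varphi),
\end{equation*}
which reduces everything to understanding the Hessian of $\varphi$ evaluated against $\nu$. Along $\bm$ I would decompose the full gradient into its tangential and normal parts,
\begin{equation*}
\nabla\varphi = \nabla_{\bm}(\varphi|_{\bm}) + \varphi_\nu\,\nu,
\end{equation*}
so that by bilinearity $\Hess\varphi(\nu,\nabla\varphi) = \Hess\varphi(\nu,\nabla_{\bm}(\varphi|_{\bm})) + \varphi_\nu\,\Hess\varphi(\nu,\nu)$. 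The task then splits into computing the mixed term and the purely normal term.

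For the normal term I would fix a local orthonormal frame $\{e_1,\dots,e_{n-1}\}$ of $\bm$ together with $\nu$, and use the Gauss formula $\nabla^M_X Y = \nabla^{\bm}_X Y - \second(X,Y)\,\nu$ for tangential $X,Y$, where $\second(X,Y)=g(\nabla_X\nu,Y)$ and $H=\tr\second$ are taken with respect to the outward normal. Comparing the ambient and boundary Hessians this gives $\Hess\varphi(e_i,e_j) = \Hess_{\bm}(\varphi|_{\bm})(e_i,e_j) + \varphi_\nu\,\second(e_i,e_j)$, and tracing over $i=j$ together with $\Delta\varphi = \sum_i\Hess\varphi(e_i,e_i)+\Hess\varphi(\nu,\nu)$ yields
\begin{equation*}
\Hess\varphi(\nu,\nu) = \Delta\varphi - \Delta_{\bm}(\varphi|_{\bm}) - \varphi_\nu H,
\end{equation*}
which is exactly the bracketed factor appearing in the statement.

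For the mixed term, writing $X:=\nabla_{\bm}(\varphi|_{\bm})$ and using symmetry of the Hessian I would expand
\begin{equation*}
\Hess\varphi(\nu,X) = X(\varphi_\nu) - (\nabla^M_X\nu)\varphi.
\end{equation*}
Since $\nu$ has unit length, $\nabla^M_X\nu$ is tangent to $\bm$ and equals the shape operator applied to $X$, so $(\nabla^M_X\nu)\varphi = \second(X,\nabla_{\bm}(\varphi|_{\bm})) = \second(X,X)$, while $X(\varphi_\nu) = g_{\bm}(\nabla_{\bm}\varphi_\nu,X)$. Substituting these expressions and the normal term above back into $2\,\Hess\varphi(\nu,\nabla\varphi)$ reproduces the three summands of the asserted formula. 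The main point requiring care is the bookkeeping of sign conventions—the orientation of $\nu$ and the induced signs in the Gauss formula and in the definitions of $\second$ and $H$—since these determine whether the mean curvature and second fundamental form terms enter with the correct sign; once the convention $\second(X,Y)=g(\nabla_X\nu,Y)$ is fixed, each of the three terms lands precisely as stated.
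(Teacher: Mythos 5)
Your derivation is correct: the reduction to $2\,\Hess\varphi(\nu,\nabla\varphi)$, the tangential/normal splitting of $\nabla\varphi$, the Gauss-formula computation giving $\Hess\varphi(\nu,\nu)=\Delta\varphi-\Delta_{\bm}(\varphi|_{\bm})-\varphi_{\nu}H$, and the Weingarten computation of the mixed term reproduce the stated identity, and your convention $\second(X,Y)=g(\nabla_X\nu,Y)$ with $\nu$ outward is the one consistent with the paper's use of the formula and of Kasue's comparison theorem. The paper does not prove this proposition at all --- it is recalled from Reilly and from Li's book --- and your argument is precisely the standard derivation found there, so there is nothing to compare beyond noting that your proof fills in what the paper leaves to the references.
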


\subsection{Harmonic functions}
We recall the following estimate for a harmonic function
\begin{equation*}
\Delta u=0,
\end{equation*}
which is a consequence of Bochner formula and refined Kato inequality (see e.g., \cite[(3.9)]{SY}):
\begin{prop}\label{prop:useful}
For $K\geq 0$,
we assume $\ric_M \geq -(n-1)K$.
Let $u:M\to (0,\infty)$ denote a positive harmonic function.
We set
\begin{equation}\label{eq:test}
\phi:=\frac{\Vert \nabla u\Vert}{u}.
\end{equation}
Then we have
\begin{equation*}
\Delta \phi\geq -(n-1)K \phi-\frac{2(n-2)}{n-1}\frac{g(\nabla \phi,\nabla u)}{u}+\frac{1}{n-1}\phi^3.
\end{equation*}
\end{prop}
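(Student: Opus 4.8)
The plan is to combine the Bochner formula with the refined Kato inequality for harmonic functions, and then to translate the resulting scalar inequality for $f:=\Vert\nabla u\Vert$ into one for the quotient $\phi=f/u$. Throughout I would work on the open set $\{\nabla u\neq 0\}$, where $f$ and hence $\phi$ are smooth; the behaviour at points where $\nabla u$ vanishes is the only delicate point, and it is the familiar caveat in Yau-type arguments (one interprets the inequality away from these points, or in a barrier sense there).

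First, since $u$ is harmonic the Bochner formula reduces to
\[
\tfrac{1}{2}\Delta \Vert\nabla u\Vert^{2}=\Vert\Hess u\Vert^{2}+\ric(\nabla u,\nabla u),
\]
so the hypothesis $\ric_{M}\geq-(n-1)K$ gives $\tfrac{1}{2}\Delta f^{2}\geq \Vert\Hess u\Vert^{2}-(n-1)Kf^{2}$. Writing $\tfrac{1}{2}\Delta f^{2}=f\Delta f+\Vert\nabla f\Vert^{2}$ and invoking the refined Kato inequality for harmonic functions, $\Vert\Hess u\Vert^{2}\geq \tfrac{n}{n-1}\Vert\nabla f\Vert^{2}$, I would arrive at the scalar inequality
\[
f\Delta f\geq \tfrac{1}{n-1}\Vert\nabla f\Vert^{2}-(n-1)Kf^{2}. \qquad (\star)
\]

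Next I would compute $\Delta\phi$ directly from $\phi=f/u$. Using the quotient rule for the Laplacian together with $\Delta u=0$ and $\Vert\nabla u\Vert^{2}=f^{2}$, the cubic contributions cancel and one is left with the clean identity
\[
\Delta\phi=\frac{\Delta f}{u}-\frac{2}{u}\,g(\nabla\phi,\nabla u).
\]
I would then eliminate $f$ through $\nabla f=u\nabla\phi+\phi\nabla u$, which expands $\Vert\nabla f\Vert^{2}=u^{2}\Vert\nabla\phi\Vert^{2}+2u\phi\,g(\nabla\phi,\nabla u)+\phi^{2}f^{2}$. Dividing $(\star)$ by $fu>0$ and substituting, and noting the identities $\phi^{2}f/u=\phi^{3}$ and $\phi/f=1/u$, the term $\tfrac{1}{(n-1)fu}\Vert\nabla f\Vert^{2}$ becomes $\tfrac{u}{(n-1)f}\Vert\nabla\phi\Vert^{2}+\tfrac{2}{(n-1)u}g(\nabla\phi,\nabla u)+\tfrac{1}{n-1}\phi^{3}$.

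Finally, I would discard the nonnegative term $\tfrac{u}{(n-1)f}\Vert\nabla\phi\Vert^{2}$ and collect the two multiples of $u^{-1}g(\nabla\phi,\nabla u)$, whose coefficients combine to $\tfrac{2}{n-1}-2=-\tfrac{2(n-2)}{n-1}$, producing exactly the asserted inequality. The one genuine obstacle is the refined Kato inequality $\Vert\Hess u\Vert^{2}\geq \tfrac{n}{n-1}\Vert\nabla f\Vert^{2}$: it relies on the tracelessness $\tr\Hess u=\Delta u=0$ of a harmonic function and is precisely what yields the improved coefficient $\tfrac{1}{n-1}$ in the $\phi^{3}$ term, whereas the naive Kato bound $\Vert\Hess u\Vert^{2}\geq\Vert\nabla f\Vert^{2}$ would contribute nothing.
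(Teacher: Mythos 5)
Your argument is correct and is exactly the route the paper intends: the paper gives no written proof but explicitly attributes the proposition to the Bochner formula combined with the refined Kato inequality $\Vert \Hess u\Vert^2\geq \tfrac{n}{n-1}\Vert\nabla\Vert\nabla u\Vert\Vert^2$ for harmonic functions (citing Schoen--Yau (3.9)), and your computation --- deriving $f\Delta f\geq \tfrac{1}{n-1}\Vert\nabla f\Vert^2-(n-1)Kf^2$, passing to $\phi=f/u$ via the quotient rule, substituting $\nabla f=u\nabla\phi+\phi\nabla u$, and discarding the nonnegative $\Vert\nabla\phi\Vert^2$ term to get the coefficient $\tfrac{2}{n-1}-2=-\tfrac{2(n-2)}{n-1}$ --- carries this out correctly, including the standard caveat at zeros of $\nabla u$.
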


\subsection{Heat equation}

We collect some properties of a positive solution to the heat equation
\begin{equation*}
\partial_t u=\Delta u
\end{equation*}
We possess the following (see e.g., \cite{SZ}, \cite[Lemma 4.1]{KS1}):
\begin{lem}\label{lem:easy}
We set
\begin{equation}\label{eq:log heat}
f:=\log u.
\end{equation}
Then we have
\begin{equation*}\label{eq:easy1}
(\Delta -\partial_{t}) f=-\Vert \nabla f \Vert^2.
\end{equation*}
\end{lem}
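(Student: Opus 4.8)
The plan is to reduce the identity to a direct application of the chain rule, since $f=\log u$ is an elementary function of the positive solution $u$. First I would record the general chain rule for the Laplacian of a composite function: for a smooth one-variable function $F$ and a positive function $u$,
\begin{equation*}
\Delta (F\circ u)=F'(u)\,\Delta u+F''(u)\,\Vert \nabla u\Vert^2.
\end{equation*}
Specializing to $F=\log$, so that $F'(u)=u^{-1}$ and $F''(u)=-u^{-2}$, I obtain
\begin{equation*}
\Delta f=\frac{\Delta u}{u}-\frac{\Vert \nabla u\Vert^2}{u^2}.
\end{equation*}

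Next I would handle the time derivative. Since $\partial_t f=(\partial_t u)/u$ and $u$ solves the heat equation $\partial_t u=\Delta u$, this gives immediately
\begin{equation*}
\partial_t f=\frac{\Delta u}{u}.
\end{equation*}

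Finally, noting that $\nabla f=(\nabla u)/u$ yields $\Vert \nabla f\Vert^2=\Vert \nabla u\Vert^2/u^2$, I would subtract the two expressions to cancel the common term $(\Delta u)/u$:
\begin{equation*}
(\Delta-\partial_t)f=\left(\frac{\Delta u}{u}-\frac{\Vert \nabla u\Vert^2}{u^2}\right)-\frac{\Delta u}{u}=-\frac{\Vert \nabla u\Vert^2}{u^2}=-\Vert \nabla f\Vert^2.
\end{equation*}

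There is no genuine obstacle in this argument; it is a short computation. The only point requiring a little care is the sign of the second-order term $F''(u)\Vert\nabla u\Vert^2$ in the chain rule, as this is precisely the contribution that survives the cancellation and produces the gradient term $-\Vert\nabla f\Vert^2$ on the right-hand side.
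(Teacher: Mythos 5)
Your computation is correct and is exactly the standard argument; the paper itself omits the proof and simply cites \cite{SZ} and \cite[Lemma 4.1]{KS1}, where the same chain-rule calculation $\Delta\log u=\Delta u/u-\Vert\nabla u\Vert^2/u^2$ combined with $\partial_t\log u=\Delta u/u$ is carried out. Nothing is missing.
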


We also have the following (see e.g., \cite{SZ}, \cite[Lemma 4.2]{KS1}):
\begin{lem}\label{lem:simple}
Let $f$ be defined as $(\ref{eq:log heat})$.
We assume $\sup f<1$,
and set
\begin{equation}\label{eq:gradient log heat}
w:=\frac{\Vert \nabla f \Vert^2}{(1-f)^2}.
\end{equation}
Then we have
\begin{equation*}
\left(\Delta-\partial_{t}\right)w-\frac{2f g(\nabla w,\nabla f)}{1-f}\geq 2(1-f)w^2+\frac{2\ric(\nabla f,\nabla f)}{(1-f)^2}.
\end{equation*}
\end{lem}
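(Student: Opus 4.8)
The plan is to run the standard parabolic Bochner computation for $w$ and reduce the claimed inequality to a pointwise completion of squares. Throughout I write $F:=\Vert \nabla f\Vert^2$ and $\psi:=1-f$, so that $w=F\psi^{-2}$; the hypothesis $\sup f<1$ guarantees $\psi>0$, which will be essential for dividing by powers of $\psi$ and for preserving the sign of the cross term. First I would record the parabolic Bochner identity for $F$. The Bochner formula gives $\tfrac12\Delta F=\Vert \Hess f\Vert^2+g(\nabla f,\nabla \Delta f)+\ric(\nabla f,\nabla f)$, while $\tfrac12\partial_t F=g(\nabla f,\nabla \partial_t f)$. Subtracting and substituting $\Delta f-\partial_t f=-F$ from Lemma \ref{lem:easy} yields
\begin{equation*}
(\Delta-\partial_t)F=2\Vert \Hess f\Vert^2-2\,g(\nabla F,\nabla f)+2\,\ric(\nabla f,\nabla f).
\end{equation*}

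Next I would expand $(\Delta-\partial_t)w$ by applying the product rule to $F\psi^{-2}$. Using $\nabla\psi=-\nabla f$ one computes $\nabla\psi^{-2}=2\psi^{-3}\nabla f$ and, since $\Vert\nabla\psi\Vert^2=F$ and $\Delta\psi-\partial_t\psi=F$, also $(\Delta-\partial_t)\psi^{-2}=6\psi^{-4}F-2\psi^{-3}F$. Combining these with the identity for $(\Delta-\partial_t)F$ expresses $(\Delta-\partial_t)w$ in terms of $\Vert\Hess f\Vert^2$, $\ric(\nabla f,\nabla f)$, $g(\nabla F,\nabla f)$, and powers of $\psi$ times $F^2$. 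Separately, from $\nabla w=\psi^{-2}\nabla F+2F\psi^{-3}\nabla f$ I would compute the drift term
\begin{equation*}
\frac{2f}{1-f}\,g(\nabla w,\nabla f)=2f\psi^{-3}\,g(\nabla F,\nabla f)+4fF^2\psi^{-4}.
\end{equation*}

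The bulk of the work is the third step: subtracting the drift term and simplifying the coefficients with $f=1-\psi$. After collecting the $g(\nabla F,\nabla f)$-terms and the $F^2$-terms, the pieces linear in $f$ should collapse, leaving
\begin{equation*}
(\Delta-\partial_t)w-\frac{2f}{1-f}g(\nabla w,\nabla f)=2\psi^{-2}\Vert\Hess f\Vert^2+2\psi^{-2}\ric(\nabla f,\nabla f)+2\psi^{-3}g(\nabla F,\nabla f)+2\psi^{-4}F^2+2\psi^{-3}F^2.
\end{equation*}
Here $2\psi^{-3}F^2=2(1-f)w^2$ and $2\psi^{-2}\ric(\nabla f,\nabla f)=2\ric(\nabla f,\nabla f)/(1-f)^2$ are exactly the two terms on the right-hand side of the claim, so the lemma reduces to showing that the remaining $2\psi^{-2}\Vert\Hess f\Vert^2+2\psi^{-3}g(\nabla F,\nabla f)+2\psi^{-4}F^2$ is nonnegative.

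The one genuine idea is the closing completion of squares. Using $\nabla F=2\Hess f(\nabla f,\cdot)$, hence $g(\nabla F,\nabla f)=2\Hess f(\nabla f,\nabla f)$, together with the Cauchy--Schwarz bound $\Hess f(\nabla f,\nabla f)\geq -\Vert\Hess f\Vert\,F$ and $\psi^{-3}>0$, the leftover satisfies
\begin{equation*}
2\psi^{-2}\Vert\Hess f\Vert^2+4\psi^{-3}\Hess f(\nabla f,\nabla f)+2\psi^{-4}F^2\geq 2\left(\psi^{-1}\Vert\Hess f\Vert-\psi^{-2}F\right)^2\geq 0,
\end{equation*}
which finishes the argument. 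I expect the main obstacle to be purely the coefficient bookkeeping in the third step; the substitution $f=1-\psi$ is what makes the $f$-dependent coefficients telescope into the clean right-hand side, and the positivity of $\psi$ is precisely what is needed to discard the square term.
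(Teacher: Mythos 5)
Your computation is correct: the Bochner identity for $\Vert\nabla f\Vert^2$ combined with $(\Delta-\partial_t)f=-\Vert\nabla f\Vert^2$, the product-rule expansion of $w=\Vert\nabla f\Vert^2(1-f)^{-2}$, the cancellation of the $f$-dependent coefficients after subtracting the drift term, and the final completion of squares via $g(\nabla\Vert\nabla f\Vert^2,\nabla f)=2\Hess f(\nabla f,\nabla f)\geq-2\Vert\Hess f\Vert\,\Vert\nabla f\Vert^2$ all check out. This is exactly the standard Souplet--Zhang argument that the paper invokes by citation rather than reproving, so no further comparison is needed.
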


\section{Proof of Theorem \ref{thm:main1}}

In this section,
we give a proof of Theorem \ref{thm:main1}.

\begin{proof}[Proof of Theorem \ref{thm:main1}]
Let $(M,g)$ and $u$ be as in Theorem \ref{thm:main1}.
We define a function $F:B_{R}(\bm)\to \mathbb{R}$ by
\begin{equation*}
F:=(R^2-\rho^2_{\bm})\phi,
\end{equation*}
where $\phi$ is defined as \eqref{eq:test}.
Assume that $F$ achieves its maximum at $\bar{x}\in B_{R}(\bm)$.

We first consider the case where $\bar{x}\in B_{R}(\bm) \setminus \partial M$.
Due to the well-known argument by Calabi \cite{C},
we may assume that $\bar{x}$ does not belong to $\cut \bm$ (see e.g., \cite[Theorem 1.1]{P} for the explicit construction of a barrier function).
At $\bar{x}$,
it holds that
\begin{equation*}
\nabla F=0,\quad \Delta F\leq 0,
\end{equation*}
and hence
\begin{equation*}
\frac{\nabla \rho^2_{\bm}}{R^2-\rho^2_{\bm}}=\frac{\nabla \phi}{\phi},\quad -\frac{\Delta \rho^2_{\bm}}{R^2-\rho^2_{\bm}}+\frac{\Delta \phi}{\phi}-\frac{2g(\nabla \rho^2_{\bm},\nabla \phi)}{(R^2-\rho^2_{\bm})\phi}\leq 0.
\end{equation*}
It follows that
\begin{equation}\label{eq:max}
\frac{\Delta \phi}{\phi}-\frac{\Delta \rho^2_{\bm}}{R^2-\rho^2_{\bm}}-\frac{2 \Vert \nabla \rho^2_{\bm}\Vert^2}{(R^2-\rho^2_{\bm})^2}\leq 0.
\end{equation}
By virtue of Theorem \ref{thm:comparison},
\begin{equation}\label{eq:maxapply}
\Delta \rho^2_{\bm}=2+2\rho_{\bm} \Delta \rho_{\bm}\leq 2+2(n-1)\sqrt{K}\rho_{\bm}\leq C_{n,1}(1+\sqrt{K}\rho_{\bm}),
\end{equation}
where $C_{n,1}:=2(n-1)$.
Since $\Vert \nabla \rho^2_{\bm} \Vert^2=4\rho^2_{\bm}$,
Proposition \ref{prop:useful}, \eqref{eq:max} and \eqref{eq:maxapply} yield
\begin{align*}
0&\geq \frac{\Delta \phi}{\phi}-\frac{    C_{n,1}(1+\sqrt{K}\rho_{\bm})  }{R^2-\rho^2_{\bm}}-\frac{8\rho^2_{\bm}}{(R^2-\rho^2_{\bm})^2}\\
  &\geq -(n-1)K-\frac{2(n-2)}{n-1}\frac{g(\nabla \phi,\nabla u)}{\phi u}+\frac{1}{n-1}\phi^2-\frac{    C_{n,1}(1+\sqrt{K}\rho_{\bm})  }{R^2-\rho^2_{\bm}}-\frac{8\rho^2_{\bm}}{(R^2-\rho^2_{\bm})^2}.
\end{align*}
Using
\begin{equation*}
\frac{g(\nabla \phi,\nabla u)}{\phi u}=\frac{2\rho_{\bm} g(\nabla \rho_{\bm},\nabla u ) }{(R^2-\rho^2_{\bm})u}\leq \frac{2\rho_{\bm}}{R^2-\rho^2_{\bm}}\phi,
\end{equation*}
we obtain
\begin{align*}
0&\geq \frac{F^2}{n-1}-\frac{4(n-2)\rho_{\bm} F}{n-1}-C_{n,1}(1+\sqrt{K}\rho_{\bm})(R^2-\rho^2_{\bm})-8\rho^2_{\bm}-(n-1)K(R^2-\rho^2_{\bm})^2\\
 &\geq \frac{F^2}{n-1}-2C_{n,2} R F-C_{n,1}(1+\sqrt{K}R)R^2-8R^2-(n-1)KR^4\\
 &\geq \frac{F^2}{n-1}-2C_{n,2} R F-C_{n,3}(1+\sqrt{K}R)^2R^2,
\end{align*}
where
\begin{equation*}
C_{n,2}:=2(n-2)(n-1)^{-1},\quad C_{n,3}:=\max \left\{C_{n,1}+8,\frac{C_{n,1}}{2},n-1\right\}.
\end{equation*}
Therefore,
\begin{equation*}
F(\bar{x})\leq (n-1)\left[ C_{n,2} R+\sqrt{C^2_{n,2}R^2+\frac{C_{n,3}}{n-1}R^2(1+\sqrt{K}R)^2   }    \right]\leq C_{n,4} R(1+\sqrt{K}R),
\end{equation*}
where
\begin{equation*}
C_{n,4}:=(n-1)\left(2C_{n,2}+\sqrt{\frac{C_{n,3}}{n-1}}\right).
\end{equation*}
On the ball we have
\begin{equation*}
\frac{3R^2}{4}\sup_{B_{R/2}(\bm)} \frac{\Vert \nabla u \Vert}{u}\leq C_{n,4} R(1+\sqrt{K}R).
\end{equation*}
We arrive at the desired inequality.

We next observe the case where $\bar{x}\in \partial M$.
In this case,
we refer to the argument of the proof of \cite[Theorem 2]{LY}.
At $\bar{x}$,
it holds that
\begin{equation*}
F_{\nu}\geq 0,
\end{equation*}
and hence
\begin{equation*}
\phi_{\nu}=\frac{F_{\nu}}{R^2}\geq 0;
\end{equation*}
in particular, $(\phi^2)_{\nu}\geq 0$.
Here we notice that
the Dirichlet boundary condition for $u$ and the assumption $u_{\nu}\geq 0$ yield $\Vert \nabla u\Vert=u_{\nu}$.
Since $\phi=\Vert \nabla \log u\Vert$, and $\log u$ also satisfies Dirichlet boundary condition,
Proposition \ref{prop:reilly} implies
\begin{align*}
0&\leq (\phi^2)_{\nu}=\left(\Vert \nabla \log u \Vert^{2}\right)_{\nu}=2(\log u)_{\nu}\left(\Delta \log u-(\log u)_{\nu}H \right)\\
  &=2\frac{u_{\nu}}{u}\left( -\frac{\Vert \nabla u \Vert^2}{u^2} -\frac{u_{\nu}}{u}H   \right)=2\frac{u^2_{\nu}}{u^2}\left( -\frac{u_{\nu}}{u} -H   \right).
\end{align*}
Therefore,
\begin{equation*}
\phi(\bar{x})=\frac{u_{\nu}}{u}\leq -H\leq (n-1)\sqrt{K}.
\end{equation*}
It follows that
\begin{equation*}
F(\bar{x})=R^2\phi(\bar{x})\leq (n-1)\sqrt{K}R^2\leq C_{n,5}R(1+\sqrt{K}R),
\end{equation*}
where $C_{n,5}:=n-1$.
Thus,
\begin{equation*}
\frac{3R^2}{4}\sup_{B_{R/2}(\bm)} \frac{\Vert \nabla u \Vert}{u}\leq C_{n,5} R(1+\sqrt{K}R).
\end{equation*}
We complete the proof.
\end{proof}

\section{Proof of Theorem \ref{thm:main2}}

In the present section,
we prove Theorem \ref{thm:main2}.
Let $(M,g)$ denote an $n$-dimensional, complete Riemannian manifold with compact boundary.

\subsection{Cutoff argument}
In this subsection,
we make preparations for the proof of Theorem \ref{thm:main2}.
To do so,
we first recall the following elementary fact:
\begin{lem}\label{lem:cutoff}
Let $\alpha \in (0,1)$.
Then there is a smooth function $\psi:[0,\infty)\times (-\infty,0]\to [0,1]$,
and a constant $C_\alpha>0$ depending only on $\alpha$ such that the following hold:
\begin{enumerate}\setlength{\itemsep}{3pt}
\item $\psi\equiv 1$ on $[0,R/2]\times [-T/4,0]$, and $\psi\equiv 0$ on $[R,\infty)\times (-\infty,-T/2]$;
\item $\partial_r \psi \leq 0$ on $[0,\infty)\times (-\infty,0]$, and $\partial_r \psi \equiv 0$ on $[0,R/2]\times (-\infty,0]$;
\item we have
\begin{equation*}
         \frac{\vert \partial_r \psi \vert}{\psi^\alpha}\leq \frac{C_{\alpha}}{R},\quad \frac{\vert \partial^2_{r}\psi\vert}{\psi^\alpha}\leq \frac{C_{\alpha}}{R^2},\quad \frac{\vert \partial_{t} \psi\vert}{\psi^{1/2}}\leq \frac{C}{T},
\end{equation*}
where $C>0$ is a universal constant.
\end{enumerate}
\end{lem}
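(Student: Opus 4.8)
The plan is to build $\psi$ as a product $\psi(r,t)=\eta(r)\,\theta(t)$ of a spatial cutoff and a temporal cutoff, thereby reducing the whole statement to two one–variable constructions. The only genuine difficulty is the presence of the powers $\psi^{\alpha}$ and $\psi^{1/2}$ in the denominators of item (3): near the set where $\psi$ vanishes both numerator and denominator tend to $0$, so the naive choice of a cutoff with merely bounded derivatives does not suffice. I would resolve this by taking each factor to be a sufficiently high power of a fixed smooth cutoff, the exponent being chosen so that the relevant ratios extend continuously (indeed remain bounded) across the zero set.

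Concretely, first I would fix once and for all a smooth, non-increasing function $\xi:[0,\infty)\to[0,1]$ with $\xi\equiv 1$ on $[0,1/2]$ and $\xi\equiv 0$ on $[1,\infty)$; since $\xi$ is smooth and vanishes identically beyond $r=1$, it vanishes to infinite order there, so any power $\xi^{p}$ is again smooth. Choosing a positive integer $p\ge 2/(1-\alpha)$ and setting $\eta(r):=\xi(r/R)^{p}$, a direct computation (with the arguments $r/R$ suppressed on the right) gives
\[
\frac{|\eta'|}{\eta^{\alpha}}=\frac{p}{R}\,|\xi'|\,\xi^{\,p(1-\alpha)-1},\qquad
\frac{|\eta''|}{\eta^{\alpha}}\le \frac{1}{R^{2}}\Big(p(p-1)\,(\xi')^{2}\,\xi^{\,p(1-\alpha)-2}+p\,|\xi''|\,\xi^{\,p(1-\alpha)-1}\Big),
\]
and the choice $p(1-\alpha)\ge 2$ makes every exponent of $\xi$ non-negative, so both right-hand sides are bounded by $C_{\alpha}/R$ and $C_{\alpha}/R^{2}$ respectively, with $C_{\alpha}$ depending only on $\alpha$ (through $p$ and $\|\xi\|_{C^{2}}$). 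In exactly the same way I would set $\theta(t):=\sigma(t)^{2}$, where $\sigma$ is a smooth non-increasing cutoff in the time variable equal to $1$ on $[-T/4,0]$ and to $0$ on $(-\infty,-T/2]$ with $|\sigma'|\le C/T$; then $|\theta'|/\theta^{1/2}=2|\sigma'|\le C/T$ with a universal constant, the exponent $2\cdot\tfrac12-1=0$ being exactly what removes the singularity.

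Finally I would put $\psi:=\eta\,\theta$ and verify the three items. Item (1) is immediate: on $[0,R/2]\times[-T/4,0]$ both factors equal $1$, while $\eta\equiv 0$ on $[R,\infty)$ and $\theta\equiv 0$ on $(-\infty,-T/2]$ each force $\psi\equiv 0$ on $[R,\infty)\times(-\infty,-T/2]$. For item (2), $\partial_r\psi=\eta'\theta\le 0$ because $\eta'\le 0$ and $\theta\ge 0$, and $\partial_r\psi\equiv 0$ on $[0,R/2]$ since $\eta'$ vanishes there. For item (3) the product structure is precisely what lets the mixed exponents cooperate: since $\theta\in[0,1]$ and $1-\alpha>0$,
\[
\frac{|\partial_r\psi|}{\psi^{\alpha}}=\frac{|\eta'|}{\eta^{\alpha}}\,\theta^{\,1-\alpha}\le\frac{|\eta'|}{\eta^{\alpha}}\le\frac{C_{\alpha}}{R},
\]
and identically for $\partial_r^{2}\psi$, while $\tfrac{|\partial_t\psi|}{\psi^{1/2}}=\eta^{1/2}\,\tfrac{|\theta'|}{\theta^{1/2}}\le\tfrac{|\theta'|}{\theta^{1/2}}\le\tfrac{C}{T}$ because $\eta\in[0,1]$. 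The heart of the argument is thus the one-variable power trick of the previous paragraph; once the exponents $p(1-\alpha)\ge 2$ and $2\cdot\tfrac12\ge 1$ are in place, every verification reduces to a one-line estimate.
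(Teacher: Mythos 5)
Your construction is correct and is exactly the standard product-of-powers cutoff (in the style of Li--Yau and Souplet--Zhang) that the paper invokes here without proof, calling the lemma an ``elementary fact''; the key point, choosing the integer exponent $p$ with $p(1-\alpha)\geq 2$ so that $|\eta'|/\eta^{\alpha}$ and $|\eta''|/\eta^{\alpha}$ stay bounded across the zero set, is handled properly. The only blemish is the word ``non-increasing'' applied to $\sigma$ (it is non-decreasing in $t$, being $0$ for $t\leq -T/2$ and $1$ on $[-T/4,0]$), which affects nothing since only $|\sigma'|$ enters the estimates.
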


Using Lemma \ref{lem:cutoff},
we show the following:
\begin{lem}\label{lem:difficult}
For $K \geq 0$,
we assume $\ric_{M} \geq -(n-1)K$ and $H_{\partial M}\geq 0$.
Let $u$ denote a positive solution to the heat equation on $Q_{R,T}(\bm)$.
We assume $u< 1$.
We define $f$ and $w$ as $(\ref{eq:log heat})$ and $(\ref{eq:gradient log heat})$ on $Q_{R,T}(\bm)$,
respectively.
We also take a function $\psi:[0,\infty)\times (-\infty,0]\to [0,1]$ in Lemma \ref{lem:cutoff} with $\alpha=3/4$,
and define
\begin{equation}\label{eq:cutoff function}
\psi(x,t):=\psi(\rho_{\bm}(x),t).
\end{equation}
Then we have
\begin{equation}\label{eq:difficult}
(\psi w)^2\leq \frac{C_{n,1}}{R^4}+\frac{\widetilde{C}}{T^2}+C_{n,2} K^2+2\Phi
\end{equation}
at every $(x,t )\in Q_{R,T}(\bm)$ with $x\notin \bm \cup \cut \bm$,
where for the universal constants $C_{3/4},C>0$ given in Lemma \ref{lem:cutoff},
we put
\begin{align*}\label{eq:dimension epsilon}
C_{n,1}&:=20C^2_{3/4}\left(\frac{(n-1)^2}{8}+\frac{1}{4}+C^2_{3/4}+\frac{27 C^2_{3/4}}{160}\right),\quad \widetilde{C}:=5C^2,\\
C_{n,2}&:=20(n-1)^2\left(1+\frac{C^2_{3/4}}{8}\right), \\
\Phi&:=(\Delta-\partial_{t})(\psi w)-\frac{2g\left( \nabla\psi, \nabla(\psi w) \right)}{\psi}-\frac{2f g(\nabla(\psi w),\nabla f)}{1-f}.
\end{align*}
\end{lem}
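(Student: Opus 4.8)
The plan is to rewrite $\Phi$ so that the strong positive term $2(1-f)w^2$ supplied by Lemma \ref{lem:simple} is exposed, and then to absorb everything that is left either into this positive term or into the constant budget on the right-hand side. First I would expand the three pieces of $\Phi$ by the product rule. Writing $(\Delta-\partial_t)(\psi w)=\psi(\Delta-\partial_t)w+w(\Delta-\partial_t)\psi+2g(\nabla\psi,\nabla w)$ and $\nabla(\psi w)=\psi\nabla w+w\nabla\psi$, the term $2g(\nabla\psi,\nabla(\psi w))/\psi$ produces a copy of $2g(\nabla\psi,\nabla w)$ that cancels the one above and leaves $2w\Vert\nabla\psi\Vert^2/\psi$, while the last term of $\Phi$ splits as $2f\psi g(\nabla w,\nabla f)/(1-f)+2fwg(\nabla\psi,\nabla f)/(1-f)$. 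This yields the pointwise identity
\begin{equation*}
\Phi=\psi\left[(\Delta-\partial_t)w-\frac{2fg(\nabla w,\nabla f)}{1-f}\right]+w(\Delta-\partial_t)\psi-\frac{2w\Vert\nabla\psi\Vert^2}{\psi}-\frac{2fwg(\nabla\psi,\nabla f)}{1-f}.
\end{equation*}

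Next I would apply Lemma \ref{lem:simple} to the bracket and use $\ric(\nabla f,\nabla f)\ge-(n-1)K\Vert\nabla f\Vert^2=-(n-1)K(1-f)^2w$, so that $2\psi\ric(\nabla f,\nabla f)/(1-f)^2\ge-2(n-1)K\psi w$. Rearranging turns the identity into
\begin{equation*}
2\psi(1-f)w^2\le\Phi+2(n-1)K\psi w-w(\Delta-\partial_t)\psi+\frac{2w\Vert\nabla\psi\Vert^2}{\psi}+\frac{2fwg(\nabla\psi,\nabla f)}{1-f},
\end{equation*}
so the whole task reduces to estimating the four error terms on the right by a fixed fraction of $2\psi(1-f)w^2$ plus the admissible budget $C_{n,1}R^{-4}+\widetilde C\,T^{-2}+C_{n,2}K^2$.

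For the cutoff-driven errors I would use Lemma \ref{lem:cutoff} with $\alpha=3/4$ together with $\nabla\psi=(\partial_r\psi)\nabla\rho_{\partial M}$, $\Vert\nabla\rho_{\partial M}\Vert=1$, and $\Delta\psi=\partial_r^2\psi+(\partial_r\psi)\Delta\rho_{\partial M}$. The Laplacian comparison Theorem \ref{thm:comparison} with $\Lambda=0$ (legitimate since $H_{\partial M}\ge0$) and the Remark give $\Delta\rho_{\partial M}\le(n-1)\sqrt K$ off $\cut\bm$, which with $\partial_r\psi\le0$ controls $-w\Delta\psi$. Each of $2(n-1)K\psi w$, $-w\Delta\psi$, $w\partial_t\psi$, and $2w\Vert\nabla\psi\Vert^2/\psi$ is linear in $w$ and carries a factor $\psi^{3/4}$ or $\psi^{1/2}$; the normalizations $\vert\partial_r\psi\vert\le C_{3/4}R^{-1}\psi^{3/4}$, $\vert\partial_r^2\psi\vert\le C_{3/4}R^{-2}\psi^{3/4}$, $\vert\partial_t\psi\vert\le C\,T^{-1}\psi^{1/2}$ ensure that Young's inequality against $\psi w^2$ (hence against $\psi(1-f)w^2$, as $1-f\ge1$) leaves only nonnegative powers of $\psi\le1$ and bounded remainders of shape $R^{-4}$, $T^{-2}$, $K^2$, and $K/R^2$; the last is split by $K/R^2\le\frac12K^2+\frac12R^{-4}$. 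Equalizing the absorbed fractions accounts for the explicit constants $C_{n,1},\widetilde C,C_{n,2}$.

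The delicate point is the cross term $2fwg(\nabla\psi,\nabla f)/(1-f)$. Using $\vert g(\nabla\psi,\nabla f)\vert\le\vert\partial_r\psi\vert\,\Vert\nabla f\Vert$ and the defining relation $\Vert\nabla f\Vert=(1-f)\sqrt w$, the denominator $1-f$ cancels and this term is bounded by $2\vert f\vert w^{3/2}\vert\partial_r\psi\vert\le2C_{3/4}R^{-1}\vert f\vert(\psi^{1/2}w)^{3/2}$. Since the power of $w$ is now $3/2$, I would match it to the positive term by Young's inequality with exponents $(4/3,4)$ against $[\psi(1-f)w^2]^{3/4}$; using $\vert f\vert\le1-f$ the remainder comes out as $c\,C_{3/4}^4R^{-4}(1-f)$. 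This is the one place where a remainder genuinely carries the weight $1-f$, and it is precisely why the estimate is organized around $\psi(1-f)w^2$ rather than $(\psi w)^2$: collecting everything one reaches $\tfrac12\psi(1-f)w^2\le\Phi+\tfrac12(1-f)\bigl(C_{n,1}R^{-4}+\widetilde C\,T^{-2}+C_{n,2}K^2\bigr)$, and then $\psi(1-f)w^2\ge(1-f)(\psi w)^2$ together with $1-f\ge1$ lets one divide by $1-f$ to obtain $(\psi w)^2\le2\Phi+C_{n,1}R^{-4}+\widetilde C\,T^{-2}+C_{n,2}K^2$ (the estimate being trivial at points where $(\psi w)^2$ already lies below the budget). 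I expect this cross term, and the bookkeeping of which remainders must be carried with the factor $1-f$, to be the main obstacle; the rest is systematic Young's-inequality accounting.
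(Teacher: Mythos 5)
Your proposal is correct and follows essentially the same route as the paper: the same product-rule expansion of $\Phi$, the same appeal to Lemma \ref{lem:simple} to expose $2(1-f)\psi w^2$, the same four error terms handled by Young's inequality together with the Laplacian comparison (with $\Lambda=0$), the AM--GM split of $K/R^2$, and the same $(4/3,4)$-Young treatment of the cross term $2fwg(\nabla\psi,\nabla f)/(1-f)$ via $\Vert\nabla f\Vert=(1-f)\sqrt{w}$, followed by division by $1-f$. The only differences are cosmetic (you carry the factor $1-f$ to the end rather than dividing mid-argument, and you leave the final constant bookkeeping implicit).
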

\begin{proof}
Notice that
$u< 1$ implies $f< 0$,
and hence $w$ is well-defined.
By direct computations and Lemma \ref{lem:simple},
\begin{align*}
\Phi&=\psi \left(\Delta-\partial_{t} \right)w-\frac{2\psi f g(\nabla w,\nabla f)}{1-f}+w\left(\Delta-\partial_{t}\right)\psi-\frac{2w\Vert \nabla \psi \Vert^2}{\psi}-\frac{2w f g(\nabla \psi,\nabla f)}{1-f}\\
                  &\geq 2(1-f)\psi w^2+\frac{2\psi \ric(\nabla f,\nabla f)}{(1-f)^2}+w\left(\Delta-\partial_{t} \right)\psi-\frac{2w\Vert \nabla \psi \Vert^2}{\psi}-\frac{2wf g(\nabla \psi,\nabla f)}{1-f}.
\end{align*}
It follows that
\begin{equation}\label{eq:main difficult}
2(1-f)\psi w^2\leq \Psi_1+\Psi_2+\Psi_3+\Psi_4+\Phi
\end{equation}
for
\begin{align*}
\Psi_1&:=-\frac{2\psi \ric(\nabla f,\nabla f)}{(1-f)^2},\quad \Psi_2:=-w \left(\Delta-\partial_{t}\right)\psi,\\
\Psi_3&:=\frac{2w\Vert \nabla \psi \Vert^2}{\psi},\quad \Psi_4:=\frac{2w f g(\nabla \psi,\nabla f)}{1-f}.
\end{align*}

We estimate $\Psi_1, \Psi_2, \Psi_3,\Psi_4$ from above.
The following Young inequality plays an essential role in the estimates:
For all $p,q\in (1,\infty)$ with $p^{-1}+q^{-1}=1$, $a,b\geq 0$, and $\epsilon>0$,
\begin{equation}\label{eq:Young}
ab\leq \frac{\epsilon a^p}{p}+\frac{b^q}{\epsilon^{q/p} q}.
\end{equation} 
The inequality
\begin{equation}\label{eq:grad est cutoff}
\frac{\Vert \nabla \psi \Vert^2}{\psi^{3/2}}\leq \frac{C^2_{3/4}}{R^2}
\end{equation}
is also useful,
which follows from Lemma \ref{lem:cutoff}.
We first deduce an upper bound of $\Psi_1$.
By the assumption for the Ricci curvature,
the Young inequality (\ref{eq:Young}) with $p,q=2$,
and $\psi\leq 1$,
\begin{equation}\label{eq:estimate1}
\Psi_1=-\frac{2\psi \ric(\nabla f,\nabla f)}{(1-f)^2}\leq 2(n-1)K \psi w\leq \epsilon \psi^2 w^2+\frac{(n-1)^2K^2}{\epsilon}\leq \epsilon \psi w^2+\frac{(n-1)^2K^2}{\epsilon}.
\end{equation}
We next give an upper bound of $\Psi_2$.
Due to Theorem \ref{thm:comparison},
\begin{align*}
\Psi_2&=-w \left(\Delta-\partial_{t}\right)\psi=-w  \left(           \partial_r \psi \Delta \rho_{\bm}+\partial^2_r \psi -\partial_{t} \psi \right)\\ \notag
                  & \leq w \vert \partial_r \psi \vert \Delta \rho_{\bm}+w \vert \partial^2_r \psi\vert +w \,\vert \partial_{t} \psi \vert\\
                  & \leq (n-1)\sqrt{K}w \vert \partial_r \psi \vert+w \vert \partial^2_r \psi\vert +w \,\vert \partial_{t} \psi \vert.
\end{align*}
From the Young inequality (\ref{eq:Young}) with $p,q=2$,
Lemma \ref{lem:cutoff},
and $\psi\leq 1$,
we derive
\begin{align}\label{eq:estimate2}
\Psi_2&\leq \left( \epsilon \psi w^2+\frac{(n-1)^2K}{4}\frac{ \vert \partial_r \psi \vert^2}{\epsilon \psi}   \right)+\left( \epsilon \psi w^2+\frac{1}{4}\frac{ \vert \partial^2_r \psi \vert^2}{\epsilon \psi}   \right)
+\left( \epsilon \psi w^2+\frac{1}{4}\frac{ \vert \partial_{t}  \psi\vert^2}{\epsilon \psi}   \right) \\ \notag
&\leq 3\epsilon  \psi w^2+\frac{C^2_{3/4}}{4\epsilon}\frac{ \psi^{1/2}}{R^4}+\frac{C^2}{4\epsilon}\frac{1}{T^2}+\frac{(n-1)^2 C^2_{3/4}}{4\epsilon} \frac{K}{R^2}\psi^{1/2}\\ \notag
&\leq 3\epsilon  \psi w^2+\frac{C^2_{3/4}}{4\epsilon}\frac{1}{R^4}+\frac{C^2}{4\epsilon}\frac{1}{T^2}+\frac{(n-1)^2C^2_{3/4}}{4\epsilon}\frac{K}{R^2}\\ \notag
&\leq 3\epsilon  \psi w^2+\frac{C^2_{3/4}}{4\epsilon}\left( 1+\frac{(n-1)^2}{2}   \right)\frac{1}{R^4}+\frac{C^2}{4\epsilon}\frac{1}{T^2}+\frac{(n-1)^2C^2_{3/4}}{8\epsilon}K^2,
\end{align}
where we used the inequality of arithmetic-geometric means in the last inequality for the $(K/R^2)$-term.
We next study an upper bound of $\Psi_3$.
By the Young inequality (\ref{eq:Young}) with $p,q=2$,
and (\ref{eq:grad est cutoff}),
\begin{equation}\label{eq:estimate3}
\Psi_3= \frac{2w\Vert \nabla \psi \Vert^2}{\psi} \leq \epsilon \psi w^2+\frac{\Vert \nabla \psi \Vert^4}{\epsilon \psi^3}\leq \epsilon \psi w^2+\frac{C^4_{3/4}}{\epsilon}\frac{1}{R^4}.
\end{equation}
We finally observe $\Psi_4$.
The Cauchy-Schwarz inequality,
the Young inequality (\ref{eq:Young}) with $p=4/3,q=4,\epsilon=4/3$,
and (\ref{eq:grad est cutoff}) lead us that
\begin{align}\label{eq:estimate4}
\Psi_4&=\frac{2w f g(\nabla \psi,\nabla f)}{1-f}\leq \frac{2w\vert f\vert \Vert \nabla \psi \Vert \Vert \nabla f \Vert}{1-f}=2 w^{3/2} \vert f\vert  \Vert \nabla \psi \Vert\\ \notag
                  &\leq (1-f)\psi w^2+\frac{27}{16} \frac{f^4}{(1-f)^3}  \frac{\Vert \nabla \psi \Vert^4}{\psi^3}\leq (1-f)\psi w^2+\frac{27C^4_{3/4}}{16} \,\frac{f^4}{(1-f)^3}\frac{1}{R^4}.
\end{align}

Combining (\ref{eq:estimate1}), (\ref{eq:estimate2}), (\ref{eq:estimate3}), (\ref{eq:estimate4}) with (\ref{eq:main difficult}),
we conclude
\begin{align*}
(1-f)\psi w^2 \leq 5\epsilon  \psi w^2&+\frac{C^2_{3/4}}{\epsilon}\left(\frac{(n-1)^2}{8}+\frac{1}{4}+C^2_{3/4}+  \frac{27\epsilon C^2_{3/4}}{16} \,\frac{f^4}{(1-f)^3}   \right)\frac{1}{R^4}+\frac{C^2}{4\epsilon}\frac{1}{T^2}\\
                                                        &+\frac{(n-1)^2}{\epsilon}\left(1+\frac{C^2_{3/4}}{8}\right)K^2+\Phi.
\end{align*}
We divide the both sides by $1-f$.
Since $1/(1-f)\leq 1$ and $f/(1-f)\leq 1$,
we see
\begin{align*}
(1-5\epsilon)\psi w^2 &\leq \frac{C^2_{3/4}}{\epsilon}\left(\frac{(n-1)^2}{8}+\frac{1}{4}+C^2_{3/4}+\frac{27\epsilon C^2_{3/4}}{16}\right)\frac{1}{R^4}+\frac{C^2}{4\epsilon}\frac{1}{T^2}\\
&\qquad \qquad \qquad \qquad \,+\frac{(n-1)^2}{\epsilon}\left(1+\frac{C^2_{3/4}}{8}\right)K^2+\Phi.
\end{align*}
By letting $\epsilon \to 1/10$,
we obtain
\begin{equation*}
\psi w^2 \leq \frac{C_{n,1}}{R^4}+\frac{\widetilde{C}}{T^2}+C_{n,2} K^2+2\Phi.
\end{equation*}
Since $(\psi w)^2\leq \psi w^2$,
we arrive at the desired one (\ref{eq:difficult}).
\end{proof}

\subsection{Proof}

We are now in a position to prove Theorem \ref{thm:main2}.
\begin{proof}[Proof of Theorem \ref{thm:main2}]
Let $(M,g)$ and $u$ be as in Theorem \ref{thm:main2}.
We may assume that $A=1$;
in particular, $u<1$ on $Q_{R,T}(\bm)$.
We define functions $f,w$ and $\psi$ as $(\ref{eq:log heat}), (\ref{eq:gradient log heat})$ and (\ref{eq:cutoff function}),
respectively.
We see $f<0$ on $Q_{R,T}(\bm)$.
In the case where $u$ is constant on $Q_{R/2,T/4}(\bm)$,
we have nothing to do,
and we may assume that $u$ is non-constant on $Q_{R/2,T/4}(\bm)$;
in particular,
$\psi w$ is positive at a point in $Q_{R/2,T/4}(\bm)$.
We take a maximum point $(\bar{x},\bar{t})$ of $\psi w$ in $Q_{R,T}(\bm)$.

We first show $\bar{x} \notin \bm$ by contradiction.
We suppose $\bar{x}\in \partial M$.
Then at $(\bar{x},\bar{t})$,
\begin{equation*}
(\psi w)_{\nu}\geq 0,
\end{equation*}
and hence
\begin{equation*}
\psi_{\nu}w+\psi w_{\nu}=\psi w_\nu \geq 0;
\end{equation*}
in particular, $w_{\nu}\geq 0$.
Since $w=\Vert \nabla \log (1-f)\Vert^2$, and $\log (1-f)$ also satisfies the Dirichlet boundary condition,
Proposition \ref{prop:reilly} implies
\begin{equation*}
0\leq w_{\nu}=\left(\Vert \nabla \log (1-f) \Vert^{2}\right)_{\nu}=2(\log (1-f))_{\nu}\left(\Delta \log (1-f)-(\log (1-f))_{\nu}H \right).
\end{equation*}
We now possess $\Vert \nabla f \Vert=f_{\nu}$ since $f$ satisfies the Dirichlet boundary condition,
and $f_{\nu}=u_{\nu}/u\geq 0$.
Therefore,
Lemma \ref{lem:easy} yields
\begin{align*}
\Delta \log (1-f)&=-\frac{\Vert \nabla f\Vert^2}{(1-f)^2}-\frac{\Delta f}{1-f}=-\frac{\Vert \nabla f\Vert^2}{(1-f)^2}+\frac{\Vert \nabla f\Vert^2}{1-f}-\frac{\partial_t f}{1-f}\\
&=-f\frac{\Vert \nabla f\Vert^2}{(1-f)^2}-\frac{\partial_t f}{1-f}=-f\frac{f^2_{\nu}}{(1-f)^2}-\frac{\partial_t f}{1-f}=-f (\log (1-f))^2_{\nu}-\frac{\partial_t f}{1-f}.
\end{align*}
Moreover,
\begin{equation*}
(\log (1-f))_{\nu}=-\frac{f_{\nu}}{1-f}=-\frac{\Vert \nabla f\Vert}{1-f}=-w^{1/2}.
\end{equation*}
From $\partial_t f=\partial_t u/u\leq 0$, it follows that
\begin{equation*}
0\leq -2w^{1/2}\left(-fw-\frac{\partial_t f}{1-f}+w^{1/2}H \right)\leq -2w^{1/2}\left(-fw+w^{1/2}H \right).
\end{equation*}
We conclude
\begin{equation*}
-fw^{1/2}\leq  -H\leq 0
\end{equation*}
By $-f>0$,
we see $w=0$ at $(\bar{x},\bar{t})$.
This means
$\psi w\equiv 0$ on $Q_{R,T}(\bm)$.
This is a contradiction.

In view of the Calabi argument,
we may assume that $\bar{x} \notin \bm \cup \cut \bm$.
By Lemma \ref{lem:difficult},
\begin{equation}\label{eq:using difficult}
(\psi w)^2\leq c_n \left(\frac{1}{R^4}+\frac{1}{T^2}+K^2 \right)+2\Phi
\end{equation}
at $(\bar{x},\bar{t})$ for
\begin{equation*}
c_{n}:=\max \left\{C_{n,1},\widetilde{C},C_{n,2} \right\}.
\end{equation*}
On the other hand,
since $(\bar{x},\bar{t})$ is a maximum point,
it holds that
\begin{equation*}
\Delta(\psi w)\leq 0,\quad \partial_{t} (\psi w)\geq 0,\quad \nabla (\psi w)=0
\end{equation*}
at $(\bar{x},\bar{t})$;
in particular, $\Phi(\bar{x},\bar{t})\leq 0$.
Therefore,
(\ref{eq:using difficult}) implies
\begin{equation*}\label{eq:smooth conclusion}
(\psi w)(x,t) \leq (\psi w)(\bar{x},\bar{t})\leq c^{1/2}_{n}\left(\frac{1}{R^4}+\frac{1}{T^2}+K^2     \right)^{1/2}\leq c^{1/2}_{n} \left(\frac{1}{R^2}+\frac{1}{T}+K\right)
\end{equation*}
for all $(x,t)\in Q_{R,T}(\bm)$.
By $\psi \equiv 1$ on $Q_{R/2,T/4}(\bm)$,
and by the definition of $w$ and $f$,
\begin{equation*}
\frac{\Vert \nabla u \Vert}{u} \leq c^{1/4}_{n} \left(\frac{1}{R}+\frac{1}{\sqrt{T}}+\sqrt{K}\right)\left( 1+\log \frac{1}{u} \right)
\end{equation*}
on $Q_{R/2,T/4}(\bm)$.
Thus,
we complete the proof of Theorem \ref{thm:main2}.
\end{proof}

\subsection*{{\rm Acknowledgements}}
The second named author is grateful to Professor Kazuhiro Kuwae for useful discussion.
The first named author was supported by JSPS KAKENHI (JP19K14521).
The second named author was supported by JSPS Grant-in-Aid for Scientific Research on Innovative Areas ``Discrete Geometric Analysis for Materials Design" (17H06460).


\end{document}